\newtheorem{theorem}{Theorem}[section]
\newtheorem{lemma}[theorem]{Lemma}
\newtheorem{definition}[theorem]{Definition}
\newtheorem{example}[theorem]{Example}
\newtheorem{proposition}[theorem]{Proposition}
\newtheorem{remark}[theorem]{Remark}
\title{Counting Polynomial-type Exceptional Units on Algebraic Varieties over Number Fields}
\author{Chen Lin$^1$, Kaihan Tang$^2$\footnote{Corresponding author.}}
\affil{{\small {$^{1,2}$School of Mathematics, Nanjing University, Nanjing 210093, China}\\
$^1$chen.lin@smail.nju.edu.cn, 
$^2$kaihantang@smail.nju.edu.cn} }
\date{}
\begin{document}
\maketitle
\begin{abstract}
Previous research on exceptional units has primarily focused on the ring of rational integers or abstract finite rings, often restricted to linear or quadratic constraints. In this paper, we extend the concept of polynomial-type exceptional units to the ring of integers of an arbitrary algebraic number field. We investigate the number of these polynomial-type exceptional units on general algebraic varieties. By employing the Chinese Remainder Theorem and Hensel’s lifting technique, we derive an exact counting formula for the number of these exceptional units on a smooth closed subscheme under the assumption of good reduction. Furthermore, using the Lang-Weil inequality, we establish an asymptotic estimate for the counting function. In particular, we prove that for varieties of degree at most two, the error term can be significantly improved, yielding a sharper asymptotic bound.
\end{abstract}
  \noindent { 2020 \it Mathematics Subject Classification: 11D79, 14G15.} \\[1mm]
    \noindent {\it Keywords: exceptional units, ring of integers, scheme, Hensel's technique, Lang-Weil inequality}

\section{Introduction}
The concept of an \textit{exceptional unit} (\textit{exunit} for abbreviation), defined as a unit $u$ in a ring $R$ such that $1-u$ is also a unit, was first introduced by Nagell \cite{Nagell} in 1969 to study cubic Diophantine equations. Since then, it has found profound applications in algebraic number theory, most notably in Lenstra’s work on Euclidean number fields \cite{Lenstra}. Apart from these applications, the counting function of exunits itself possesses a rich arithmetic structure. Harrington and Jones \cite{HJ} investigated the dynamical behavior of the iterations of this counting function over $\mathbb{Z}/n\mathbb{Z}$. 

A number of works focused on counting the number of exunits satisfying certain constraints. Sander \cite{Sander} obtained the formula for the number of solutions in $\mathbb{Z}/n\mathbb{Z}$ to the linear congruence $x_1+x_2\equiv c\pmod{n}$ with both $x_1$ and $x_2$ being exunits. This was later extended to the sums of $k$ exunits by Yang and Zhao \cite{YZ}, and further generalized to arbitrary finite commutative rings by Miguel \cite{Miguel}. Beyond linear sums, there have been attempts to address non-linear constraints: Mollahajiaghaei \cite{Mo} derived the formula for the number of solutions to the quadratic congruence $x_1^2+x_2^2+\dots+x_k^2\equiv c\pmod{n}$ with each $x_i$ being an exunit, while Feng and Hong \cite{FH2023} investigated $x_1^e+x_2^e+\dots+x_k^e\equiv c\pmod{n}$ with a given positive integer $e$.

Anand, Chattopadhyay and Roy \cite{ACR2020} generalized this notion to ``$f$-exunits". For a polynomial $f(X)\in\mathbb{Z}[X]$ and an integer $n\geq 2$, an element $a\in\mathbb{Z}/n\mathbb{Z}$ is said to be an \textit{$f$-exunit} if $\mathrm{gcd}(f(a),n)=1$. This generalization naturally prompted several investigations into the counting functions of $f$-exunits similar to that of classical exunits as is mentioned above, see \cite{ACR2020,ACR2021,ZHZ,FH}. However, unlike the classical case, they only focused on the sumsets of $f$-exunits.

Previous studies have primarily focused on the ring of rational integers $\mathbb{Z}$ or abstract finite rings, with investigations largely restricted to linear constraints (i.e. sumsets). In his study of finite rings, Miguel \cite{Miguel} explicitly pointed out that these counting problems can be extended to the ring of integers of an algebraic number field. Another natural and significant question is: instead of a single linear equation, can we determine the counting formula of $f$-exunits on a general polynomial equation or algebraic varieties defined by systems of polynomial equations? 

In this paper, we extend these results to the setting of number field. Our main result (Theorem \ref{main}) provides a counting formula for $f$-exunits lying on a variety $X$ modulo an ideal $\mathfrak{n}$.

To state our main results precisely, we first generalize the definition of $f$-exunits to the setting of an algebraic number field.

\begin{definition}
    Let $K$ be an algebraic number field with $\mathcal{O}_K$ its ring of integers and $\mathfrak{n}$ a non-zero integral ideal of $\mathcal{O}_K$. For a non-constant polynomial $f(X)\in\mathcal{O}_K[X]$, an element $\alpha\in \mathcal{O}_K/\mathfrak{n}$ is called an \textbf{$f$-exunit modulo $\mathfrak{n}$} or an \textbf{$(f,\mathfrak{n})$-exunit} if $f(\alpha)$ is a unit in the ring $\mathcal{O}_K/\mathfrak{n}$.  The set of all $(f,\mathfrak{n})$-exunits in $\mathcal{O}_K/\mathfrak{n}$ is denoted by $\mathscr{E}_{f,\mathfrak{n}}$.
\end{definition}

With the definition  of $f$-exunits extended to the ring of integers, we now turn to the counting problem for solutions satisfying a system of polynomial equations. We use the language of schemes, which simplifies the statement and proof of our main theorem. 

For a closed subscheme $X$ of $\mathbb{A}^n_{\mathcal{O}_K}$ (the $n$-dimensional affine space over $\mathcal{O}_K$), we define  the following set:
    $$\mathscr{E}_{f,\mathfrak{n}}(X)=X(\mathcal{O}_K/\mathfrak{n} )\cap \prod_{i=1}^n \mathscr{E}_{f,\mathfrak{n}},$$
where $X(\mathcal{O}_K/\mathfrak{n})$ denotes the $\mathcal{O}_K/\mathfrak{n}$-points of $X$.

Our main result provides an explicit counting formula for the cardinality of this set when the close subscheme $X$ satisfies smoothness conditions. 

\begin{theorem}
\label{main}
    Given a number field $K$ with ring of integers $\mathcal{O}_K$, let $f\in \mathcal{O}_K[x]$ be a polynomial, and $X$ be a smooth closed subscheme of $\mathbb{A}^n_{\mathcal{O}_K}$ with $\mathrm{codim}(X)=d$. For a nonzero integral ideal $\mathfrak{n}$ of $\mathcal{O}_K$ such that $X$ has a good reduction at $\mathfrak{p}$ for every prime ideal $\mathfrak{p}\vert \mathfrak{n}$, we have
 $$\#\mathscr{E}_{f,\mathfrak{n}}(X)=(\mathrm{N}_{K/\mathbb{Q}}(\mathfrak{n}))^{n-d}\prod_{\mathfrak{p}\vert \mathfrak{n}}\left(\frac{\#X(\mathcal{O}_K/\mathfrak{p})-\# \mathcal{N}^f(\mathfrak{p},X)}{(\mathrm{N}_{K/\mathbb{Q}}(\mathfrak{p}))^{n-d}}\right),$$
 where $\mathcal{N}^f(\mathfrak{p},X)=\{(x_1,\dots,x_n)\in X(\mathcal{O}_K/\mathfrak{p}):\prod_{i=1}^{n}\Bar{f}(x_i)=0\}$, $\mathrm{N}_{K/\mathbb{Q}}$ denotes the norm map in $K/\mathbb{Q}$, $\Bar{f}$ is the image of $f$ under the projection  $\mathcal{O}_K\longrightarrow \mathcal{O}_K/\mathfrak{p}$.
\end{theorem}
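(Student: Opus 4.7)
The plan is to reduce the problem to the prime-ideal level by combining the Chinese Remainder Theorem with Hensel's lifting, and then repackage the product.

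First, I would factor $\mathfrak{n}=\prod_{\mathfrak{p}\mid\mathfrak{n}}\mathfrak{p}^{e_\mathfrak{p}}$ and invoke CRT: since $\mathcal{O}_K/\mathfrak{n}\cong\prod_\mathfrak{p}\mathcal{O}_K/\mathfrak{p}^{e_\mathfrak{p}}$ as rings, the functor of points gives $X(\mathcal{O}_K/\mathfrak{n})\cong\prod_\mathfrak{p} X(\mathcal{O}_K/\mathfrak{p}^{e_\mathfrak{p}})$, and an element is a unit in $\mathcal{O}_K/\mathfrak{n}$ iff its image in each $\mathcal{O}_K/\mathfrak{p}^{e_\mathfrak{p}}$ is a unit. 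Hence the $f$-exunit condition decomposes coordinate-by-coordinate and factor-by-factor, yielding
$$\#\mathscr{E}_{f,\mathfrak{n}}(X)=\prod_{\mathfrak{p}\mid\mathfrak{n}}\#\mathscr{E}_{f,\mathfrak{p}^{e_\mathfrak{p}}}(X).$$
This reduces the proof to establishing, for each prime power $\mathfrak{p}^e$, the identity $\#\mathscr{E}_{f,\mathfrak{p}^e}(X)=(\mathrm{N}\mathfrak{p})^{(e-1)(n-d)}(\#X(\mathcal{O}_K/\mathfrak{p})-\#\mathcal{N}^f(\mathfrak{p},X))$.

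Second, I would observe that $\mathcal{O}_K/\mathfrak{p}^e$ is local with maximal ideal $\mathfrak{p}/\mathfrak{p}^e$, so $f(\alpha)$ is a unit in $\mathcal{O}_K/\mathfrak{p}^e$ if and only if $\bar f(\bar\alpha)\neq 0$ in the residue field $\mathcal{O}_K/\mathfrak{p}$. Consequently the $(f,\mathfrak{p}^e)$-exunit condition depends only on the reduction modulo $\mathfrak{p}$, and the reduction map
$$\pi_e:\mathscr{E}_{f,\mathfrak{p}^e}(X)\longrightarrow\mathscr{E}_{f,\mathfrak{p}}(X)=X(\mathcal{O}_K/\mathfrak{p})\setminus\mathcal{N}^f(\mathfrak{p},X)$$
is well-defined and surjective once we know each fiber is non-empty.

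Third, and this is the main technical step, I would use Hensel's lifting together with good reduction to show that every fiber of $\pi_e$ has exactly $(\mathrm{N}\mathfrak{p})^{(e-1)(n-d)}$ elements. Good reduction means the special fiber $X\otimes_{\mathcal{O}_K}\mathcal{O}_K/\mathfrak{p}$ is smooth of dimension $n-d$, so locally near any given $\bar x\in X(\mathcal{O}_K/\mathfrak{p})$ the scheme $X$ is cut out by $d$ equations $g_1,\dots,g_d$ whose Jacobian has a $d\times d$ minor that is a unit at $\bar x$. By induction on $e$, going from $\mathfrak{p}^{e-1}$ to $\mathfrak{p}^e$: given a lift $x\in X(\mathcal{O}_K/\mathfrak{p}^{e-1})$, the set of lifts to $X(\mathcal{O}_K/\mathfrak{p}^e)$ forms a torsor under $\ker(\mathrm{Jac}(g_i)(\bar x):(\mathfrak{p}^{e-1}/\mathfrak{p}^e)^n\to(\mathfrak{p}^{e-1}/\mathfrak{p}^e)^d)$, a free $\mathcal{O}_K/\mathfrak{p}$-module of rank $n-d$; thus each one-step fiber has size $(\mathrm{N}\mathfrak{p})^{n-d}$. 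Iterating from $e=1$ up to $e$ gives the asserted fiber size $(\mathrm{N}\mathfrak{p})^{(e-1)(n-d)}$; this is where I expect the most care, as one must verify the Jacobian criterion across the successive approximations and justify rewriting $X$ locally via the $d$ defining equations.

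Finally, combining the three steps,
$$\#\mathscr{E}_{f,\mathfrak{n}}(X)=\prod_{\mathfrak{p}\mid\mathfrak{n}}(\mathrm{N}\mathfrak{p})^{(e_\mathfrak{p}-1)(n-d)}\bigl(\#X(\mathcal{O}_K/\mathfrak{p})-\#\mathcal{N}^f(\mathfrak{p},X)\bigr),$$
and using $\mathrm{N}_{K/\mathbb{Q}}(\mathfrak{n})^{n-d}=\prod_\mathfrak{p}(\mathrm{N}\mathfrak{p})^{e_\mathfrak{p}(n-d)}$ to pull $(\mathrm{N}\mathfrak{p})^{-(n-d)}$ inside each factor yields the formula exactly as stated. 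Apart from the Hensel step, the only subtle point is keeping track that $X$ smooth over $\mathcal{O}_K$ of relative codimension $d$ means the special fiber is cut out locally by precisely $d$ equations with surjective Jacobian, which is ensured by the good reduction hypothesis.
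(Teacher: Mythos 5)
Your proposal is correct and follows essentially the same route as the paper: a CRT decomposition to reduce to prime powers, the observation that the unit condition only depends on the reduction modulo $\mathfrak{p}$, and a Hensel/Jacobian-rank argument showing each lifting fiber has exactly $(\mathrm{N}_{K/\mathbb{Q}}(\mathfrak{p}))^{n-d}$ elements, iterated $e-1$ times. The paper's Lemma 2.2, Lemma 2.3, and Proposition 2.4 correspond precisely to your three steps.
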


While Theorem \ref{main} establishes a formula for $\#\mathcal{E}_{f,\mathfrak{n}}(X)$, it relies on the local point counts $\#X(\mathcal{O}_K/\mathfrak{p})$ and the intersection sizes $\#\mathcal{N}^f(\mathfrak{p},X)$, which vary irregularly with the prime $\mathfrak{p}$. Using the Lang-Weil inequality, we can derive an bound for $\#\mathcal{E}_{f,\mathfrak{n}}(X)$ which depends only on the geometric invariants of $X$ and the absolute norm of the ideal $\mathfrak{n}$. 

\begin{theorem}
\label{approximate}
 Given a number field $K$ with ring of integers $\mathcal{O}_K$, let $f\in \mathcal{O}_K[x]$ be a polynomial. Let $X$ be a smooth closed subvariety of $\mathbb{A}^n_{\mathcal{O}_K}$ with $\mathrm{codim}(X)=d$, which is geometrically irreducible over K. For a nonzero integral ideal $\mathfrak{n}$ of $\mathcal{O}_K$ such that $X$ has a good reduction at $\mathfrak{p}$ for any prime ideal $\mathfrak{p}\vert \mathfrak{n}$, we have
    $$\#\mathscr{E}_{f,\mathfrak{n}}(X)=(\mathrm{N}_{K/\mathbb{Q}}(\mathfrak{n}))^{n-d}\exp\left({O\left(\frac{\log\mathrm{N}_{K/\mathbb{Q}}(\mathfrak{n})}{\log\log\mathrm{N}_{K/\mathbb{Q}}(\mathfrak{n})}\right)}\right).$$
    The implied constant in the $O$-terms depends only on $K,X$ and $f$.
\end{theorem}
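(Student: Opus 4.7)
The plan is to start from the exact formula of Theorem~\ref{main} and estimate each local factor using the Lang--Weil inequality, then control the resulting product over the prime divisors of $\mathfrak{n}$ by means of the classical bound $\omega(\mathfrak{n})\ll\log N/\log\log N$.

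First I would write
$$\frac{\#\mathscr{E}_{f,\mathfrak{n}}(X)}{(\mathrm{N}_{K/\mathbb{Q}}(\mathfrak{n}))^{n-d}}=\prod_{\mathfrak{p}\vert\mathfrak{n}}L_\mathfrak{p},\qquad L_\mathfrak{p}:=\frac{\#X(\mathcal{O}_K/\mathfrak{p})-\#\mathcal{N}^f(\mathfrak{p},X)}{(\mathrm{N}_{K/\mathbb{Q}}(\mathfrak{p}))^{n-d}}.$$
Because $X$ is smooth and geometrically irreducible of dimension $n-d$ and has good reduction at each $\mathfrak{p}\vert\mathfrak{n}$, Lang--Weil gives $\#X(\mathcal{O}_K/\mathfrak{p})=(\mathrm{N}_{K/\mathbb{Q}}(\mathfrak{p}))^{n-d}+O((\mathrm{N}_{K/\mathbb{Q}}(\mathfrak{p}))^{n-d-1/2})$ with an implied constant depending only on $K$ and $X$. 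For $\mathcal{N}^f(\mathfrak{p},X)$, I would decompose it as the union, over $1\le i\le n$ and roots $\alpha$ of $\bar f$ in $\mathcal{O}_K/\mathfrak{p}$, of the hyperplane sections $X\cap\{x_i=\alpha\}$. For all but finitely many pairs $(i,\alpha)$ — namely those where the generic fibre $X$ itself is contained in $\{x_i=\alpha\}$, a situation ruled out by geometric irreducibility except for a bounded list absorbed in the constant — such a section has dimension at most $n-d-1$, so a Lang--Weil-type point count on each component yields $\#\mathcal{N}^f(\mathfrak{p},X)=O((\mathrm{N}_{K/\mathbb{Q}}(\mathfrak{p}))^{n-d-1})$, uniformly in $\mathfrak{p}$. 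Combining the two estimates gives $L_\mathfrak{p}=1+O((\mathrm{N}_{K/\mathbb{Q}}(\mathfrak{p}))^{-1/2})$ whenever $\mathrm{N}_{K/\mathbb{Q}}(\mathfrak{p})$ exceeds a constant depending only on $K,X,f$, while $L_\mathfrak{p}$ is bounded between two positive constants for the finitely many smaller primes.

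Taking logarithms and summing over the prime ideal divisors of $\mathfrak{n}$,
$$\log\prod_{\mathfrak{p}\vert\mathfrak{n}}L_\mathfrak{p}=\sum_{\mathfrak{p}\vert\mathfrak{n}}\log L_\mathfrak{p}\ll\sum_{\mathfrak{p}\vert\mathfrak{n}}1=\omega(\mathfrak{n}),$$
and the classical estimate $\omega(\mathfrak{n})\le[K:\mathbb{Q}]\,\omega(\mathrm{N}_{K/\mathbb{Q}}(\mathfrak{n}))\ll\log\mathrm{N}_{K/\mathbb{Q}}(\mathfrak{n})/\log\log\mathrm{N}_{K/\mathbb{Q}}(\mathfrak{n})$ will conclude the argument after exponentiation.

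The hard part will be the uniform bound $\#\mathcal{N}^f(\mathfrak{p},X)=O((\mathrm{N}_{K/\mathbb{Q}}(\mathfrak{p}))^{n-d-1})$: one must verify that the hyperplane sections $X\cap\{x_i=\alpha\}$ generically have the expected codimension in $X$, check that good reduction propagates this transversality to the mod-$\mathfrak{p}$ fibre, and then apply an effective Lang--Weil-type count on a possibly reducible subvariety with constants depending only on the degree data of $X$ and $f$. Once this is in place, the rest of the proof is a routine application of the $\omega$-bound together with the absorption of the finitely many small-norm primes into the implied constant.
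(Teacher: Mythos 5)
Your proposal is correct and follows essentially the same route as the paper: apply Theorem \ref{main}, use Lang--Weil to get $\#X(\mathcal{O}_K/\mathfrak{p})=\mathrm{N}_{K/\mathbb{Q}}(\mathfrak{p})^{n-d}+O(\mathrm{N}_{K/\mathbb{Q}}(\mathfrak{p})^{n-d-1/2})$, bound $\#\mathcal{N}^f(\mathfrak{p},X)=O(\mathrm{N}_{K/\mathbb{Q}}(\mathfrak{p})^{n-d-1})$ via the dimension drop of the sections $X\cap\{f(x_i)=0\}$ together with a Bézout-type bound on the number of components, and finish with $\omega(\mathfrak{n})\ll\log\mathrm{N}_{K/\mathbb{Q}}(\mathfrak{n})/\log\log\mathrm{N}_{K/\mathbb{Q}}(\mathfrak{n})$. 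The only delicate point you flag (a local factor possibly vanishing when $X(\mathcal{O}_K/\mathfrak{p})\subset\mathcal{N}^f(\mathfrak{p},X)$) is handled no more carefully in the paper, which simply sets that case aside as trivial.
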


Although Theorem \ref{approximate} provides an asymptotic bound for general smooth varieties, the error term can be significantly improved when the variety $X$ has low degree. Since previous studies have focused mostly on linear and quadratic congruences as discussed before, we derive the following sharper estimate for the varieties of degree no more than 2. 

\begin{theorem}\label{approximate 2}
     Given a number field $K$ with ring of integers $\mathcal{O}_K$, let $f\in \mathcal{O}_K[x]$ be a polynomial. Let $X$ be a smooth closed subvariety of $\mathbb{A}^n_{\mathcal{O}_K}$ with $\mathrm{codim}(X)=d$ and $\mathrm{deg}X\le 2$, which is geometrically irreducible over K. For a nonzero integral ideal $\mathfrak{n}$ of $\mathcal{O}_K$ such that $X$ has a good reduction at $\mathfrak{p}$ for any prime ideal $\mathfrak{p}\vert \mathfrak{n}$, we have$$\#\mathscr{E}_{f,\mathfrak{n}}(X)=\mathrm{N}_{K/\mathbb{Q}}(\mathfrak{n})^{n-d}\left(
\log \mathrm{N}_{K/\mathbb{Q}}(\mathfrak{n})\right)^{O(1)}.$$
The implied constant in the $O$-terms depends only on $K,X$ and $f$.
\end{theorem}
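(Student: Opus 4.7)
The plan is to start from the exact product formula given by Theorem \ref{main}. Writing $q_{\mathfrak{p}}:=\mathrm{N}_{K/\mathbb{Q}}(\mathfrak{p})$, the proof reduces to showing that every local factor
$$F_{\mathfrak{p}} := \frac{\#X(\mathcal O_K/\mathfrak{p})-\#\mathcal N^f(\mathfrak{p},X)}{q_{\mathfrak{p}}^{\,n-d}}$$
satisfies $F_{\mathfrak{p}}=1+O(q_{\mathfrak{p}}^{-1})$ with an implied constant depending only on $K,X,f$. Once this is in place, the number-field Mertens estimate $\sum_{q_{\mathfrak{p}}\le x}q_{\mathfrak{p}}^{-1}=\log\log x+O(1)$ forces $\sum_{\mathfrak{p}\mid\mathfrak{n}}q_{\mathfrak{p}}^{-1}\ll\log\log\mathrm{N}_{K/\mathbb{Q}}(\mathfrak{n})$, and hence $\prod_{\mathfrak{p}\mid\mathfrak{n}}F_{\mathfrak{p}}=(\log\mathrm{N}_{K/\mathbb{Q}}(\mathfrak{n}))^{O(1)}$; the finitely many primes whose residue norm lies below the relevant threshold are absorbed into this $O(1)$.

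The upgrade over Theorem \ref{approximate} is thus to replace the Lang--Weil saving $O(q_{\mathfrak{p}}^{-1/2})$ in $\#X(\mathcal O_K/\mathfrak{p})$ by the stronger $O(q_{\mathfrak{p}}^{-1})$. If $\deg X=1$, then $X$ is a linear subvariety and $\#X(\mathcal O_K/\mathfrak{p})=q_{\mathfrak{p}}^{\,n-d}$ identically. If $\deg X=2$, I would use the classical minimal-degree inequality $\deg Y\ge\mathrm{codim}\,Y+1$ for irreducible non-degenerate projective varieties to conclude that, up to a linear change of variables over $\mathcal O_K$, $X$ is a smooth affine quadric hypersurface of dimension $n-d$ sitting in a linear subspace of $\mathbb A^n_{\mathcal O_K}$ of dimension $n-d+1$. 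An explicit Gauss-sum evaluation for smooth affine quadrics (separating the parity of $n-d$) gives $\#X(\mathcal O_K/\mathfrak{p})=q_{\mathfrak{p}}^{\,n-d}+O(q_{\mathfrak{p}}^{(n-d)/2})$, whence $\#X(\mathcal O_K/\mathfrak{p})/q_{\mathfrak{p}}^{\,n-d}=1+O(q_{\mathfrak{p}}^{-1})$ for $n-d\ge 2$; the conic case $n-d=1$ is handled separately using that a smooth conic with an $\mathbb{F}_q$-point is rational and hence has $q_{\mathfrak{p}}+O(1)$ points.

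For $\mathcal N^f(\mathfrak{p},X)$ the relevant decomposition is $\mathcal N^f(\mathfrak{p},X)=\bigcup_{i=1}^{n}(X_{\mathfrak{p}}\cap\{\bar f(x_i)=0\})$. For all but finitely many $\mathfrak{p}$, the hypersurface $\{\bar f(x_i)=0\}$ does not contain the geometrically irreducible reduction $X_{\mathfrak{p}}$, so each intersection is a proper closed subscheme of dimension $\le n-d-1$ whose degree is bounded in terms of $\deg X,\deg f,n$. Lang--Weil (or the elementary Schwartz--Zippel estimate) then yields $\#\mathcal N^f(\mathfrak{p},X)\ll q_{\mathfrak{p}}^{\,n-d-1}$, so $\#\mathcal N^f(\mathfrak{p},X)/q_{\mathfrak{p}}^{\,n-d}=O(q_{\mathfrak{p}}^{-1})$. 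Combining this with the quadric point count gives $F_{\mathfrak{p}}=1+O(q_{\mathfrak{p}}^{-1})$ and closes the argument via the Mertens step of the first paragraph.

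The principal obstacle is the sharper point count in the second paragraph. Lang--Weil alone delivers only a $q^{-1/2}$ saving, and $\sum_{\mathfrak{p}\mid\mathfrak{n}}q_{\mathfrak{p}}^{-1/2}$ can be as large as a positive power of $\log\mathrm{N}_{K/\mathbb{Q}}(\mathfrak{n})$, which is far too large to produce a polylogarithmic bound. Exploiting the rigid structure of degree-$\le 2$ varieties by the minimal-degree classification to reduce to smooth affine quadrics, and then invoking the explicit Gauss-sum formula (equivalently Weil's bound for quadratic character sums), is what promotes the $q^{-1/2}$ saving to the $q^{-1}$ saving that Mertens' theorem can digest; everything else is a routine adaptation of the proof of Theorem \ref{approximate}.
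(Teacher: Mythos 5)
Your proof is correct and reaches the right local estimate $F_{\mathfrak{p}}=1+O(q_{\mathfrak{p}}^{-1})$, and your Mertens step and the treatment of $\mathcal N^f(\mathfrak{p},X)$ coincide with the paper's. The difference lies entirely in how the $q^{-1}$ saving for $\#X(\mathcal O_K/\mathfrak p)$ is obtained. Your premise that ``Lang--Weil alone delivers only a $q^{-1/2}$ saving'' is true only for the crude form of the inequality: the version the paper quotes (Lemma \ref{LW}, following Ghorpade--Lachaud) states the bound as $(\ell-1)(\ell-2)q^{r-1/2}+O(q^{r-1})$ with the leading coefficient made explicit, so for $\deg X\le 2$ the $q^{r-1/2}$ term vanishes identically and one gets $\#X(\mathcal O_K/\mathfrak p)=q_{\mathfrak p}^{\,n-d}+O(q_{\mathfrak p}^{\,n-d-1})$ in one line, with no classification needed. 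Your route through the minimal-degree inequality, reduction to a smooth quadric hypersurface in its linear span, and the Gauss-sum evaluation is a legitimate substitute and has the merit of being self-contained and explicit about constants, but it carries extra bookkeeping the paper avoids: the linear change of variables should be performed over each residue field (or a localization) rather than over $\mathcal O_K$ globally, the minimal-degree bound is a statement about non-degenerate projective varieties so one must pass to the projective closure, and the parity/$b=0$ cases of the quadric count need the smoothness hypothesis to rule out the cone. One shared loose end (present in the paper's proofs of Theorems \ref{approximate} and \ref{approximate 2} as well): if some local factor vanishes, $\#\mathscr E_{f,\mathfrak n}(X)=0$ and the asserted asymptotic equality must be read as excluding that degenerate case; your remark that small primes are ``absorbed into the $O(1)$'' does not cover a factor that is exactly zero.
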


Our approach relies on the multiplicative structure of the counting function derived from the Chinese Remainder Theorem in Dedekind domains. For the counting functions in the local case, we employ a Hensel-type technique to lift solutions from the residue field $\mathcal{O}_K/\mathfrak{p}$ to $\mathcal{O}_K/\mathfrak{p}^k$. To establish the asymptotic estimate for $\#\mathscr{E}_{f,\mathfrak{n}}(X)$, we use the well-known Lang-Weil inequality to control the error terms arising from the local points on the variety.

This paper is organized as follows.

In Section \ref{proof of main}, we provide the proof of Theorem \ref{main}. The argument begins with establishing the multiplicativity of the counting function $\#\mathscr{E}_{f,\mathfrak{n}}(X)$ via the Chinese Remainder Theorem. Then we derived these local counting formulas by using Hensel lifting. To illustrate the concrete application of our counting formula, we present a detailed example involving a given variety $X$ over the quadratic field $\mathbb{Q}(\sqrt{-5})$ at the end of this section.

In Section \ref{proof of approximation}, we focus on the asymptotic behavior of $\#\mathscr{E}_{f,\mathfrak{n}}(X)$ as the norm of the ideal $N(\mathfrak{n})$ tends to infinity. Using the Lang-Weil inequality, we derive a general estimate for the number of $(f,\mathfrak{n})$-exunits, which depends on the dimension and degree of the variety and the absolute norm of $\mathfrak{n}$. Furthermore, we investigate the specific case where $\deg X \le 2$. Noting that the leading error term coefficient vanishes for quadratic varieties and linear varieties, we obtain a refined estimate that improves the error bound from $\exp\left({O\left(\frac{\log\mathrm{N}_{K/\mathbb{Q}}(\mathfrak{n})}{\log\log\mathrm{N}_{K/\mathbb{Q}}(\mathfrak{n})}\right)}\right)$ to $\exp\left(
O(\log\log \mathrm{N}_{K/\mathbb{Q}}(\mathfrak{n}))
\right)=\left(
\log \mathrm{N}_{K/\mathbb{Q}}(\mathfrak{n})\right)^{O(1)}$.


\section{Proof of Theorem \ref{main}}
\label{proof of main}

In this section, we complete the proof of Theorem \ref{main} by employing the Euler decomposition and the Hensel lifting technique. Moreover, we present a concrete example to illustrate the application of our counting formula.

Throughout this paper, for any $f \in \mathcal{O}_K[x_1,\dots,x_m]$, we denote by $\bar{f} \in (\mathcal{O}_K/\mathfrak{n})[x_1,\dots,\allowbreak x_m]$ its image under the reduction map modulo $\mathfrak{n}$. For brevity, we shall write $f(\alpha)$ to denote $\bar{f}(\alpha)$ for $\alpha \in (\mathcal{O}_K/\mathfrak{n})^m$.

First, we prove that $\#\mathscr{E}_{f,\mathfrak{n}}(X)$ is a multiplicative function with respect to the ideals of $\mathcal{O}_K$.

\begin{lemma}\label{decomposition}
    Let $\mathfrak{m},\mathfrak{n}$ be integral ideals of $\mathcal{O}_K$ satisfying $\mathfrak{m}+\mathfrak{n}=\mathcal{O}_K$, and $X$ be a subscheme of $\mathbb{A}_{\mathcal{O}_K}^n$. Then we have
    $$\mathscr{E}_{f,\mathfrak{mn}}(X)\cong \mathscr{E}_{f,\mathfrak{m}}(X)\times \mathscr{E}_{f,\mathfrak{n}}(X).$$
\end{lemma}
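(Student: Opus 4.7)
The plan is to derive the bijection directly from the Chinese Remainder Theorem for Dedekind domains. Since $\mathfrak{m}+\mathfrak{n}=\mathcal{O}_K$, CRT supplies a ring isomorphism $\varphi\colon \mathcal{O}_K/\mathfrak{mn}\xrightarrow{\sim}\mathcal{O}_K/\mathfrak{m}\times\mathcal{O}_K/\mathfrak{n}$, given in each factor by the natural reduction. Applying $\varphi$ coordinate-wise yields a bijection $(\mathcal{O}_K/\mathfrak{mn})^n\to(\mathcal{O}_K/\mathfrak{m})^n\times(\mathcal{O}_K/\mathfrak{n})^n$. Because $\varphi$ is a ring homomorphism, it commutes with the evaluation of any polynomial $g\in\mathcal{O}_K[x_1,\dots,x_n]$, so a tuple $\alpha$ satisfies every polynomial in the defining ideal of $X$ if and only if each of its two images does. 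This gives a bijection $X(\mathcal{O}_K/\mathfrak{mn})\cong X(\mathcal{O}_K/\mathfrak{m})\times X(\mathcal{O}_K/\mathfrak{n})$; more abstractly, this is just the functor of points of $X$ turning products of rings into products of sets.

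Next I would verify that this bijection respects the exunit condition. The central fact is that an element $(a,b)$ of a product ring $A\times B$ is a unit precisely when both $a$ and $b$ are units. Applied coordinate-by-coordinate to $\varphi(f(x_i))=(f(x_i\bmod\mathfrak{m}),\,f(x_i\bmod\mathfrak{n}))$, this shows that $f(x_i)$ is a unit in $\mathcal{O}_K/\mathfrak{mn}$ if and only if $f(x_i\bmod\mathfrak{m})$ is a unit in $\mathcal{O}_K/\mathfrak{m}$ and $f(x_i\bmod\mathfrak{n})$ is a unit in $\mathcal{O}_K/\mathfrak{n}$. Imposing this simultaneously for all $n$ coordinates and combining with the scheme-theoretic bijection above yields the desired identification $\mathscr{E}_{f,\mathfrak{mn}}(X)\cong\mathscr{E}_{f,\mathfrak{m}}(X)\times\mathscr{E}_{f,\mathfrak{n}}(X)$.

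The argument is essentially formal, so I do not anticipate a genuine obstacle. The only care required is to confirm compatibility between the CRT isomorphism and the functor-of-points description of $X$; since $X$ is a closed subscheme of $\mathbb{A}^n_{\mathcal{O}_K}$, its $R$-points are literally those tuples in $R^n$ killed by the defining ideal, and this description is manifestly preserved under the ring-level CRT decomposition. The resulting lemma immediately reduces the global counting problem to the local one over prime-power ideals, which will then be handled by Hensel lifting in the next step.
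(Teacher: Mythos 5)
Your proposal is correct and follows essentially the same route as the paper: the Chinese Remainder Theorem gives the ring isomorphism, the functor of points (equivalently, the explicit description of $X(R)$ as tuples killed by the defining ideal) converts it into a bijection on $X$-points, and the fact that units in a product ring are exactly the pairs of units shows the exunit condition is preserved. No gaps; your explicit verification that polynomial evaluation commutes with the CRT map is just a slightly more detailed version of the paper's appeal to functoriality.
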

\begin{proof}
    According to the Chinese Remainder Theorem, we have a canonical ring isomorphism$$\mathcal{O}_K/\mathfrak{mn}\cong \mathcal{O}_K/\mathfrak{m}\times \mathcal{O}_K/\mathfrak{n}.$$

Since the functor of points of a scheme is compatible with finite products of rings,
for any subscheme of $\mathbb{A}^n_{\mathcal{O}_K}$, we have the following bijection $$X(\mathcal{O}_K/\mathfrak{mn})\cong X(\mathcal{O}_K/\mathfrak{m})\times X(\mathcal{O}_K/\mathfrak{n}).$$ 
Under this identification, the condition that
$f(x_i)$ be a unit in $\mathcal O_K/\mathfrak{mn}$ is equivalent to the condition
that its images in both $\mathcal O_K/\mathfrak m$ and
$\mathcal O_K/\mathfrak n$ are units.
Therefore, the above bijection restricts to the following bijection
$$
\mathscr E_{f,\mathfrak{mn}}(X)
\;\cong\;
\mathscr E_{f,\mathfrak m}(X)\times \mathscr E_{f,\mathfrak n}(X),
$$
which proves the lemma.
\end{proof}
\begin{remark}
    We note that the smoothness and good reduction assumptions imposed in Theorem \ref{main} are not required for this lemma. The multiplication property established here is a purely ring-theoretic consequence of the Chinese Remainder Theorem.
\end{remark}
Let $\mathfrak{n}=\mathfrak{p}_1^{\alpha_1}\dots \mathfrak{p}_r^{\alpha_r}$ be an ideal of $\mathcal{O}_K$ with $\mathfrak{p}_1,\dots,\mathfrak{p}_r$ distinct prime ideals of $\mathcal{O}_K$. By Lemma \ref{decomposition}, we have $\mathscr{E}_{f,\mathfrak{n}}(X)\cong\prod_{i=1}^r \mathscr{E}_{f,\mathfrak{p}_i^{\alpha_i}}(X)$. The following lemma addresses the lifting problem in the case where $\mathfrak{n}$ is a power of a prime ideal.
\begin{lemma}\label{lifing}
    Let $X$ be a smooth closed subscheme of $\mathbb{A}^n_{\mathcal{O}_K}$ with $\mathrm{codim}X=d$, $\mathfrak{p}$ be a prime ideal of $\mathcal{O}_K$ such that $X$ has a good reduction at $\mathfrak{p}$. For $\mathbf{a}\in X(\mathcal{O}_K/\mathfrak{p}^k)$, the set $$\{\mathbf{b}\in X(\mathcal{O}_K/\mathfrak{p}^{k+1}): \mathbf{b}\equiv \mathbf{a}\pmod{\mathfrak{p}^k}\}$$
    has cardinality $(\mathrm{N}_{K/\mathbb{Q}}(\mathfrak{p}))^{n-d}$.
\end{lemma}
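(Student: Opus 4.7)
The plan is to work $\mathfrak{p}$-adically and reduce the lifting count to counting solutions of an affine-linear system over the residue field, where the Jacobian criterion forces the count to be $(\mathrm{N}_{K/\mathbb{Q}}(\mathfrak{p}))^{n-d}$ on the nose. To set up, I would localize at $\mathfrak{p}$: fix a uniformizer $\pi$ of the DVR $\mathcal{O}_{K,\mathfrak{p}}$, so that $\mathcal{O}_K/\mathfrak{p}^j\cong\mathcal{O}_{K,\mathfrak{p}}/(\pi^j)$ for all $j\ge 1$ and the residue field $\kappa=\mathcal{O}_K/\mathfrak{p}$ has order $\mathrm{N}_{K/\mathbb{Q}}(\mathfrak{p})$. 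Throughout I may assume $k\ge 1$.

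Let $\bar{\mathbf a}\in X(\kappa)$ be the reduction of $\mathbf a$. Since $X$ is smooth over $\mathcal{O}_K$ of codimension $d$ with good reduction at $\mathfrak{p}$, the Jacobian criterion furnishes a Zariski-open neighborhood $U$ of $\bar{\mathbf a}$ in $\mathbb{A}^n_{\mathcal{O}_K}$ and polynomials $f_1,\dots,f_d\in\mathcal{O}_{K,\mathfrak p}[x_1,\dots,x_n]$ cutting out $X\cap U$, with the $d\times n$ Jacobian $J(\bar{\mathbf a})=\bigl(\partial f_i/\partial x_j(\bar{\mathbf a})\bigr)$ of rank $d$ over $\kappa$. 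Any lift of $\mathbf a$ to $(\mathcal{O}_K/\mathfrak p^{k+1})^n$ reduces to $\bar{\mathbf a}$ mod $\mathfrak{p}$, hence automatically lies in $U$, and counting lifts in $X$ coincides with counting lifts in $V(f_1,\dots,f_d)$.

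Now fix any set-theoretic lift $\tilde{\mathbf a}\in(\mathcal{O}_K/\mathfrak p^{k+1})^n$ of $\mathbf a$. Every lift has the unique form $\mathbf b=\tilde{\mathbf a}+\pi^k\mathbf h$ with $\mathbf h\in\kappa^n$, giving $\mathrm{N}_{K/\mathbb{Q}}(\mathfrak p)^n$ candidates. Since $(\pi^k)^2\equiv 0\pmod{\pi^{k+1}}$ for $k\ge 1$, the first-order Taylor expansion in $\mathcal{O}_{K,\mathfrak p}/(\pi^{k+1})$ gives
$$f_i(\tilde{\mathbf a}+\pi^k\mathbf h)\equiv f_i(\tilde{\mathbf a})+\pi^k\sum_{j=1}^n\frac{\partial f_i}{\partial x_j}(\tilde{\mathbf a})\,h_j\pmod{\pi^{k+1}}.$$
Writing $f_i(\tilde{\mathbf a})=\pi^k c_i$ with $c_i\in\kappa$, the lifting condition $f_i(\mathbf b)\equiv 0\pmod{\pi^{k+1}}$ is the affine-linear system $J(\bar{\mathbf a})\mathbf h=-\mathbf c$ over $\kappa$.

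The remaining step is pure linear algebra: $J(\bar{\mathbf a})$ is a $d\times n$ matrix of rank $d$ over the field $\kappa$, so the map $\mathbf h\mapsto J(\bar{\mathbf a})\mathbf h$ surjects onto $\kappa^d$; the solution set is therefore a coset of the $(n-d)$-dimensional kernel and has exactly $|\kappa|^{n-d}=\mathrm{N}_{K/\mathbb{Q}}(\mathfrak p)^{n-d}$ elements. The one step requiring genuine care is the Zariski-local complete-intersection presentation with full-rank Jacobian: $X$ need not be a global complete intersection, but smoothness of relative codimension $d$ together with good reduction at $\mathfrak p$ guarantees such a presentation in a neighborhood of $\bar{\mathbf a}$, which is all that the count of lifts of a single $\mathbf a$ depends on.
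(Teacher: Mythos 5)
Your proof is correct and follows essentially the same route as the paper's: reduce modulo $\mathfrak p$-adic localization, write lifts as $\tilde{\mathbf a}+\pi^k\mathbf h$, expand to first order, and count solutions of the resulting linear system over the residue field using the rank-$d$ Jacobian. If anything, your use of a local complete-intersection presentation by exactly $d$ equations is slightly cleaner than the paper's use of an arbitrary generating set $g_1,\dots,g_m$ of the ideal, since a rank-$d$ matrix with $d$ rows is automatically surjective and the consistency of the affine-linear system (which the paper leaves implicit) comes for free.
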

\begin{proof}
    Let $\hat{\mathcal{O}}_K$ be the completion of $\mathcal{O}_K$ at $\mathfrak{p}$. Then $\hat{\mathcal{O}}_K$ is a discrete valuation ring with uniformizer $\pi$. We have $\hat{\mathcal{O}}_K/(\pi^m\hat{\mathcal{O}}_K)=\mathcal{O}_K/\mathfrak{p}^m$ for all positive integers $m$. Hence all counting statements modulo $\mathfrak{p^m}$ can be carried out after replacing
$\mathcal{O}_K$ by its $\mathfrak{p}$-adic completion.
 Without loss of generality, we may assume that $\mathcal{O}_K$ is complete at $\mathfrak{p}$, and hence a discrete valuation ring. Let $\pi$ denote the uniformizer of $\mathcal{O}_K$, i.e. $(\pi)=\mathfrak{p}$.

Let $I$ denote the ideal of $X$. Since $O_K[x_1,\dots,x_n]$ is Noetherian, the ideal $I$ is finitely generated. Let $\{g_1,\dots,g_m\}$ be its set of generators.

    All points of $X(\mathcal{O}_K/\mathfrak{p}^{k+1})$ lifting $\mathbf{a}$ have the form $\mathbf{\alpha} \equiv \mathbf{a}+\mathbf{\delta}\pi^k\pmod{\pi^{k+1}}$ for some $\delta \in ({\mathcal{O}}_K/(\pi))^n$ because $\pi^k\delta$ is well-defined modulo $\pi^{k+1}$.
     
  Let $\mathbf{g}$ denote the polynomial vector $(g_1,\dots,g_m)\in (\mathcal{O}_K[x_1,\dots,x_n])^m$. Consider the equation 
  $$\mathbf{g}(\mathbf{a}+\mathbf{\delta}\pi^k)\equiv \mathbf{g}(\mathbf{a})+\mathrm{J}(X)(\mathbf{a})\delta\pi^k \equiv 0 \pmod{\pi^{k+1}},$$
  where $\mathrm{J}(X)$ denotes the Jacobi matrix of $X$. Since $\mathbf{g}(\mathbf{a})\equiv 0\pmod{\pi^k}$, we may write  $\mathbf{g}(\mathbf{a})=\pi^k \mathbf{a}^{\prime}$ for some $\mathbf{a}^\prime\in (\mathcal{O}_K/(\pi))^n$. Then the equation above can be simplified as follows
  $$\mathbf{a}^{\prime }+\mathrm{J}(X)(\mathbf{a})\delta \equiv 0\pmod{\pi}.$$
  Since $X$ is smooth with $\mathrm{codim}X=d$ and has a good reduction at $\mathfrak{p}$,  we have $\mathrm{rank}(\mathrm{J}(X))=d$ over the residue field $\mathcal{O}_K/\mathfrak{p}$. Thus, there are $(\#({\mathcal{O}}_K/\mathfrak{p}))^{n-d}$ distinct solutions. Hence the cardinality of the set 
  $$\{\mathbf{b}\in X(\mathcal{O}_K/\mathfrak{p}^{k+1}): \mathbf{b}\equiv \mathbf{a}\pmod{\mathfrak{p}^k}\}$$
 is equal to $(\#({\mathcal{O}}_K/\pi))^{n-d}=(\#({\mathcal{O}}_K/\mathfrak{p}))^{n-d}=(\mathrm{N}_{K/\mathbb{Q}}(\mathfrak{p}))^{n-d}$.   
\end{proof}

We can now derive the explicit counting formula for prime powers by  inductively applying the lifting property established in Lemma \ref{lifing}.

\begin{proposition}\label{prime}
Let $X$ be a smooth closed subscheme of $\mathbb{A}^n_{\mathcal{O}_K}$ with $\mathrm{codim}X=d$, $\mathfrak{p}$ be a prime ideal of $\mathcal{O}_K$ such that $X$ has a good reduction at $\mathfrak{p}$. For a positive integer $e$, we have 
 $$\#\mathscr{E}_{f,\mathfrak{p}^e}(X)=(\mathrm{N}_{K/\mathbb{Q}}(\mathfrak{p}))^{(n-d)e}\left(\frac{\#X(\mathcal{O}_K/\mathfrak{p})-\# \mathcal{N}^f(\mathfrak{p},X)}{(\mathrm{N}_{K/\mathbb{Q}}(\mathfrak{p}))^{n-d}}\right).$$
\end{proposition}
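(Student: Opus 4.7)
The plan is to combine two facts: the uniform fiber size of the reduction map established in Lemma \ref{lifing}, and the elementary observation that the $f$-exunit condition modulo $\mathfrak{p}^e$ depends only on the reduction modulo $\mathfrak{p}$. Together these reduce the count over $\mathcal{O}_K/\mathfrak{p}^e$ to a count over the residue field, multiplied by an explicit power of $\mathrm{N}_{K/\mathbb{Q}}(\mathfrak{p})$.

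The first step is to verify that for any $x \in \mathcal{O}_K/\mathfrak{p}^e$, the element $f(x)$ is a unit if and only if its reduction $\bar f(\bar x)$ is nonzero in the residue field $\mathcal{O}_K/\mathfrak{p}$. This follows from the local ring structure of $\mathcal{O}_K/\mathfrak{p}^e$: its unique maximal ideal is the image of $\mathfrak{p}$, so the non-units are precisely the elements that reduce to $0$ modulo $\mathfrak{p}$. Hence a point $\mathbf{x} = (x_1,\ldots,x_n) \in X(\mathcal{O}_K/\mathfrak{p}^e)$ lies in $\mathscr{E}_{f,\mathfrak{p}^e}(X)$ if and only if its reduction modulo $\mathfrak{p}$ lies in $X(\mathcal{O}_K/\mathfrak{p}) \setminus \mathcal{N}^f(\mathfrak{p}, X)$, a set of cardinality $\#X(\mathcal{O}_K/\mathfrak{p}) - \#\mathcal{N}^f(\mathfrak{p}, X)$.

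The second step is to iterate Lemma \ref{lifing}. Applying it successively for $k = 1, 2, \ldots, e-1$, we see that the reduction map $X(\mathcal{O}_K/\mathfrak{p}^e) \to X(\mathcal{O}_K/\mathfrak{p})$ is surjective with fibers of constant cardinality $\mathrm{N}_{K/\mathbb{Q}}(\mathfrak{p})^{(n-d)(e-1)}$. Combining this with the previous step gives
$$\#\mathscr{E}_{f,\mathfrak{p}^e}(X) = \mathrm{N}_{K/\mathbb{Q}}(\mathfrak{p})^{(n-d)(e-1)}\bigl(\#X(\mathcal{O}_K/\mathfrak{p}) - \#\mathcal{N}^f(\mathfrak{p}, X)\bigr),$$
and rewriting $(n-d)(e-1) = (n-d)e - (n-d)$ yields exactly the stated formula.

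I do not anticipate any substantive obstacle: the genuine work has already been carried out in Lemma \ref{lifing}. The only care required is to pair the ``exunit condition factors through the residue field'' observation with the constant-fiber statement, and to note that the two steps can be combined cleanly because the exunit condition is imposed on the reduction, which is the base of the fibration rather than the fiber.
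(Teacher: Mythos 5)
Your proposal is correct and follows essentially the same route as the paper: both arguments observe that the unit condition on $f(x_i)$ in the local ring $\mathcal{O}_K/\mathfrak{p}^e$ depends only on the reduction modulo $\mathfrak{p}$, and both iterate Lemma \ref{lifing} to conclude that the reduction map $X(\mathcal{O}_K/\mathfrak{p}^e)\to X(\mathcal{O}_K/\mathfrak{p})$ has constant fibers of size $(\mathrm{N}_{K/\mathbb{Q}}(\mathfrak{p}))^{(n-d)(e-1)}$. No substantive differences.
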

\begin{proof}
    Consider the following exact sequence:
    $$0\longrightarrow \mathfrak{p}/\mathfrak{p}^e\longrightarrow \mathcal{O}_K/\mathfrak{p}^e\overset{\varphi}\longrightarrow \mathcal{O}_K/\mathfrak{p}\longrightarrow 0.$$
    The projection $\varphi$ induces a map 
    $$\phi: \mathscr{E}_{f,\mathfrak{p}^e}(X)\longrightarrow \mathscr{E}_{f,\mathfrak{p}}(X),\quad (x_1,\dots,x_n)\longmapsto (\varphi(x_1),\dots,\varphi(x_n))$$
 Since an element of $\mathcal O_K/\mathfrak p^e$ is a unit if and only if its image in $\mathcal O_K/\mathfrak p$ is a unit, the map $\phi$ is well defined.

Recall that $X$ is smooth with $\mathrm{codim}X=d$ and has a good reduction at $\mathfrak{p}$. By applying Lemma \ref{lifing} inductively, the cardinality of the fiber of the projection $X(\mathcal O_K/\mathfrak p^e)\longrightarrow X(\mathcal O_K/\mathfrak p)$
over any $\mathbf a\in X(\mathcal O_K/\mathfrak p)$
is equal to $(\mathrm N_{K/\mathbb Q}(\mathfrak p))^{(n-d)(e-1)}$.
 
     For any $\mathbf{a}\in X(\mathcal{O}_K/\mathfrak{p})$, there exist $(\mathrm{N}_{K/\mathbb{Q}}(\mathfrak{p}))^{(n-d)(e-1)}$ elements lifting $\mathbf{a}$ in $X(\mathcal{O}_K/\mathfrak{p}^e)$. Since the unit condition is preserved under reduction modulo $\mathfrak p$,
each element of $\mathscr E_{f,\mathfrak p}(X)$ admits exactly
$(\mathrm N_{K/\mathbb Q}(\mathfrak p))^{(n-d)(e-1)}$ lifts in
$\mathscr E_{f,\mathfrak p^e}(X)$.

     Hence, we have
     $$\#\mathscr{E}_{f,\mathfrak{p}^e}(X)=(\mathrm{N}_{K/\mathbb{Q}}(\mathfrak{p}))^{(n-d)(e-1)}\#\mathscr{E}_{f,\mathfrak{p}}(X),$$
 Since $\#\mathscr{E}_{f,\mathfrak{p}}(X)=\#X(\mathcal{O}_K/\mathfrak{p})-\# \mathcal{N}^f(\mathfrak{p},X)$, we have
 \begin{align*}
     \#\mathscr{E}_{f,\mathfrak{p}^e}(X)&=(\mathrm{N}_{K/\mathbb{Q}}(\mathfrak{p}))^{(n-d)(e-1)}(\#X(\mathcal{O}_K/\mathfrak{p})-\# \mathcal{N}^f(\mathfrak{p},X))\\
     &=(\mathrm{N}_{K/\mathbb{Q}}(\mathfrak{p}))^{(n-d)e}\left(\frac{\#X(\mathcal{O}_K/\mathfrak{p})-\# \mathcal{N}^f(\mathfrak{p},X)}{(\mathrm{N}_{K/\mathbb{Q}}(\mathfrak{p}))^{n-d}}\right).
\end{align*}
This completes the proof of Proposition \ref{prime}.
\end{proof}

With the explicit counting formula for prime powers established in Proposition \ref{prime}, we are ready to prove our main result.

\begin{proof}[Proof of Theorem \ref{main}]
    Let $\mathfrak{n}=\mathfrak{p}_1^{e_1}\dots \mathfrak{p}_g^{e_g}$ be the prime ideal factorization of $\mathfrak{n}$, where $\mathfrak{p}_1,\dots,\mathfrak{p}_g$ are distinct prime ideals of $\mathcal{O}_K$. According to Lemma\ref{decomposition}, we have $$ \#\mathscr{E}_{f,\mathfrak{n}}(X)=\prod_{i=1}^g\#\mathscr{E}_{f,\mathfrak{p}_i^{e_i}}(X).$$
    Using Proposition \ref{prime}, we have 
    \begin{align*}
        \#\mathscr{E}_{f,\mathfrak{n}}(X)&=\prod_{i=1}^g(\mathrm{N}_{K/\mathbb{Q}}(\mathfrak{p}_i))^{(n-d)e_i}\left(\frac{\#X(\mathcal{O}_K/\mathfrak{p}_i)-\# \mathcal{N}^f(\mathfrak{p}_i,X)}{(\mathrm{N}_{K/\mathbb{Q}}(\mathfrak{p}_i))^{n-d}}\right)\\
        &=(\mathrm{N}_{K/\mathbb{Q}}(\mathfrak{n}))^{n-d}\prod_{\mathfrak{p}\vert \mathfrak{n}}\left(\frac{\#X(\mathcal{O}_K/\mathfrak{p})-\# \mathcal{N}^f(\mathfrak{p},X)}{(\mathrm{N}_{K/\mathbb{Q}}(\mathfrak{p}))^{n-d}}\right).
    \end{align*}
    This completes the proof of Theorem \ref{main}.
\end{proof}

Having established the general counting formula, we now present a concrete example to show its application. In the following example, we explicitly compute the various local quantities involved and conclude with the precise count of $f$-exunits modulo $\mathfrak{n}$ for a specific variety.

\begin{example}
\label{e.g.}
    Let $K=\mathbb{Q}(\sqrt{-5})$, so that its ring of integers is $\mathcal{O}_K=\mathbb{Z}[\sqrt{-5}]$. We consider the variety $X\subset\mathbb{A}^2_{\mathcal{O}_K}$ defined by the circle equation $x^2+y^2=c$, where the parameter $c\in \mathbb{Z}\setminus \{0\}$. This variety $X$ is smooth and geometrically irreducible. Let $f(x)=x-a$ be a polynomial in $ \mathbb{Z}[x]$.
    
    To ensure good reductions, we assume the ideal $\mathfrak{n}$ is coprime to $(2c)$. Let $\mathfrak{p}$ be a prime ideal of $\mathcal{O}_K$ lying above a prime $p\in \mathbb{Z}$. The number of rational points on $X$ over the residue field $\mathcal{O}_K/\mathfrak{p}$ is given by the well-known formula $$\#X(\mathcal{O}_K/\mathfrak{p})=\mathrm{N}_{K/\mathbb{Q}}(\mathfrak{p})-\left(\frac{-1}{\mathfrak{p}}\right),$$
    where $\left(\frac{\cdot}{\mathfrak{p}}\right)$ is the generalized Legendre symbol.
    
    Next, we calculate intersection number $\#\mathcal{N}^f(\mathfrak{p},X)$. The set $\mathcal{N}^f(\mathfrak{p},X)$ consists of the points in $X(\mathcal{O}/\mathfrak{p})$ satisfying $x=a$ or $y=a$. If $c-a^2$ is not a square in $\mathcal{O}_K/\mathfrak{p}$, then $\#\mathcal{N}^f(\mathfrak{p},X)=0$. If $c-a^2$ is a square in $\mathcal{O}_K/\mathfrak{p}$, then there exists $b\in \mathcal{O}_K$ such that $b^2\equiv c-a^2\pmod{\mathfrak{p}}$. The intersection set $\mathcal{N}^f(\mathfrak{p},X)$ contains at most for points: $(a,b),(a,-b),(b,a)$ and $(-b,a)$. 
    
    The exact cardinality depends on the overlapping of these points. If $a^2-c\equiv 0\pmod{\mathfrak{p}}$, then $b=-b$, implying that there are $2$ points in $\mathcal{N}^f(\mathfrak{p},X)$. If $2a^2-c\equiv 0\pmod{\mathfrak{p}}$, then $b=\pm a$, implying that there are $3$ points in $\mathcal{N}^f(\mathfrak{p},X)$. Otherwise, there are exactly 4 distinct points in $\mathcal{N}^f(\mathfrak{p},X)$. Note that for any rational integers $r$, $r\equiv 0\pmod{\mathfrak{p}}$ if and only if $r\equiv 0\pmod{p}$. We define a function $m(p)$ on rational primes as follows
    
$$m(p)=
\begin{cases}
    2 \quad\text{ if }p\vert (c-a^2),\\
    3 \quad\text{ if }p\vert (c-2a^2),\\
    4 \quad\text{ if }\mathrm{gcd}(p,(c-2a^2)(c-a^2))=1.
\end{cases}$$
Consequently, we have $\#\mathcal{N}^f(\mathfrak{p},X)=m(p)$ if $c-a^2$ is a square in $\mathcal{O}_K/\mathfrak{p}$, and 0 otherwise.
    
The quadratic character of $c-a^2$ in the residue field $\mathcal{O}_K/\mathfrak{p}$ depends on the decomposition behavior of the rational prime $p$ in $K$. If $p$ is inert, then $\mathcal{O}_K/\mathfrak{p}=\mathbb{F}_{p^2}$. Since every element of $\mathbb{F}_p$ becomes a square in its quadratic extension $\mathbb{F}_{p^2}$, the integer $c-a^2$ must be a square in $\mathcal{O}_K/\mathfrak{p}$. If $p$ is splitting or ramified, then $\mathcal{O}_K/\mathfrak{p}=\mathbb{F}_{p}$. In this case, whether $c-a^2$ is a square in $\mathcal{O}_K/\mathfrak{p}$ depends on the Legendre symbol $\left(\frac{c-a^2}{p}\right)$.
 
Using the Dedekind-Kummer theorem \cite[Chapter 1, Proposition 8.3]{Neukirch}, the decomposition of a rational prime $p$ in $\mathcal{O}_K=\mathbb{Z}[\sqrt{-5}]$ is determined by the Legendre symbol $\left(\frac{-5}{p}\right)$. The prime $p$ splits in $\mathbb{Z}[\sqrt{-5}]$ if and only if $\left(\frac{-5}{p}\right)=1$, i.e. $p\equiv 1,3,7,9\pmod{20}$. The prime $p$ inerts in $\mathbb{Z}[\sqrt{-5}]$ if and only if $\left(\frac{-5}{p}\right)=-1$, i.e. $p\equiv 11,13,17,19\pmod{20}$. The discriminant of the field $\mathbb{Q}[\sqrt{-5}]$ is $d_K=-20$. Thus, a prime $p$ is ramified if and only if $p\vert 20$. Since we have assume that $p$ is coprime to $2$, the only possible ramified prime is $p=5$.

According to Theorem \ref{main}, the counting formular is given by
    $$\begin{aligned}
\#\mathcal{E}_{f,\mathfrak{n}}(X) =\mathrm{N}_{K/\mathbb{Q}}(\mathfrak{n}) & \prod_{p \vert N(\mathfrak{n}), \text{ inert}} \left(1 - \frac{1+m(p)}{p^2}\right) \\
& \times \prod_{p \vert N(\mathfrak{n}), \text{ split}} \left(1 - \frac{\left(\frac{-1}{p}\right) +\frac{1}{2}\left(\left(\frac{c-a^2}{p}\right)+1 \right)m(p)}{p}\right)^{\nu_{\mathfrak{n}}(p)} \\
& \times \prod_{p \vert N(\mathfrak{n}), \text{ ramified}} \left(1 - \frac{1+\frac{1}{2}\left(\left(\frac{c-a^2}{p}\right)+1 \right)m(p)}{p}\right),
\end{aligned}$$
    where ${\nu_\mathfrak{n}}$ is defined as follows
    $${\nu_\mathfrak{n}(p)}=
    \begin{cases}
        1 &\text{ if }(p)\nmid\mathfrak{n},\\
        2 &\text{ if }(p)\mid\mathfrak{n}.
    \end{cases}$$
\end{example}

As illustrated in Example \ref{e.g.}, the explicit formula of $\#\mathcal{E}_{f,\mathfrak{n}}(X)$ relies heavily on the arithmetic properties of individual primes, such as their splitting behavior in $K$ and the quadratic character of the polynomial values. Consequently, the local intersection term $\#\mathcal{N}^f(\mathfrak{p},X)$ varies irregularly, which makes the exact formula too complex to analyze the global growth rate of the counting function. Motivated by this observation, we turn our attention in the next section to the asymptotic behavior of $\#\mathcal{E}_{f,\mathfrak{n}}(X)$. By employing the Lang-Weil inequality, we can effectively control these local factors and establish a general bound that depends primarily on the geometric invariants of the variety $X$, rather than the specific arithmetic of each prime.

\section{Asymptotic estimates for the counting function}
\label{proof of approximation}
In this section, we use the Lang-Weil inequality to give an approximation of $\#\mathscr{E}_{f,\mathfrak{n}}(X)$ when $X$ is geometrically irreducible over $K$ and all primes dividing $\mathfrak{n}$ are of good reduction for $X$. We first state the Lang–Weil inequality, which will be used repeatedly in this section.
\begin{lemma}[\cite{LW} Theorem 1 or \cite{SG} Remark 11.3]\label{LW}
    Let $k=\mathbb{F}_q$ be a finite field, and X be a subvariety of $\mathbb{A}^n_k$ with $\mathrm{dim}X=r$ and  $\mathrm{deg}X=\ell$. Then we have
    $$\vert \#X(k)-q^r\vert =(\ell-1)(\ell-2)q^{r-\frac{1}{2}}+O(q^{r-1}).$$
    The implied constant in the $O$-terms depends only on $n,d,r$.
\end{lemma}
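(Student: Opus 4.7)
The statement is the classical Lang-Weil inequality from 1954. The authors have explicitly flagged it as a lemma \emph{cited} from the literature, attributing it jointly to \cite{LW} and \cite{SG}. My plan, correspondingly, would not be to furnish an independent proof, but simply to invoke these sources; reproducing the Lang-Weil argument in full would lie far outside the scope of this paper. Nonetheless, I sketch the standard line of reasoning, since it is useful context for the applications in Section~\ref{proof of approximation}.

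The argument proceeds by induction on $r=\dim X$. The base case $r=1$ rests on Weil's bound for smooth projective geometrically irreducible curves, namely $|\#C(\mathbb{F}_q)-(q+1)|\leq 2g\sqrt{q}$, combined with Castelnuovo-type estimates of the form $g\leq (\ell-1)(\ell-2)/2$ for a curve of degree $\ell$ in projective space. For the inductive step, one projectivizes $X$ in $\mathbb{P}^n$ and intersects with a hyperplane $H$; by a Bertini-type theorem this section is again geometrically irreducible, of dimension $r-1$ and degree at most $\ell$, so the inductive hypothesis applies. Counting $\mathbb{F}_q$-points of $X$ by summing the points of $X\cap H$ over all hyperplanes $H\in\mathbb{P}^n(\mathbb{F}_q)$, with a double-counting correction reflecting how many hyperplanes pass through a given point, then yields the stated leading asymptotic $q^r$ together with the error $(\ell-1)(\ell-2)q^{r-1/2}$. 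Contributions from the hyperplane at infinity, where the projective closure differs from the affine variety, are absorbed into the $O(q^{r-1})$ term.

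The main obstacle in this outline is the Bertini-type step: over a finite field, a hyperplane defined over $\mathbb{F}_q$ may fail to give a geometrically irreducible section, and one cannot appeal directly to the classical Bertini theorem, which is formulated over infinite (or algebraically closed) base fields. This is the technical heart of Lang-Weil and is handled either by first passing to a sufficiently large extension $\mathbb{F}_{q^m}$ in which a geometrically generic hyperplane becomes rational, and then descending, or by a direct counting argument over the parameter space of hyperplanes to bound the density of sections that are reducible, of lower dimension than expected, or otherwise degenerate. Keeping this exceptional locus under control, so that its contribution is absorbed in $O(q^{r-1})$ uniformly in $q$ with constants depending only on $n$, $\ell$, $r$, is precisely what makes the original argument subtle despite the clarity of the underlying geometric picture.
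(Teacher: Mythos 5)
The paper offers no proof of this lemma --- it is imported verbatim from Lang--Weil \cite{LW} and Ghorpade--Lachaud \cite{SG} --- and your proposal to simply invoke those sources is exactly what the authors do. Your accompanying sketch (induction on dimension via hyperplane sections, Weil's bound plus the Castelnuovo genus estimate in the curve case, and the finite-field Bertini issue as the technical crux) is a faithful outline of the classical argument and raises no concerns.
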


Using this result, we prove Theorem \ref{approximate} as follows.
\begin{proof}[Proof of Theorem \ref{approximate}]
    According to Theorem \ref{main}, we have 
    $$\#\mathscr{E}_{f,\mathfrak{n}}(X)=(\mathrm{N}_{K/\mathbb{Q}}(\mathfrak{n}))^{n-d}\prod_{\mathfrak{p}\vert\mathfrak{n}}\left(\frac{\#X(\mathcal{O}_K/\mathfrak{p})-\# \mathcal{N}^f(\mathfrak{p},X)}{(\mathrm{N}_{K/\mathbb{Q}}(\mathfrak{p}))^{n-d}}\right).$$ 
    Recall that $X$ is smooth and geometrically irreducible, and has a good reduction at $\mathfrak{p}$ for any prime $\mathfrak{p}\vert\mathfrak{n}$. Using the Lang-Weil inequality, we obtain that $$\#X(\mathcal{O}_K/\mathfrak{p})=\mathrm{N}_{K/\mathbb{Q}}(\mathfrak{p})^{n-d}+O(\mathrm{N}_{K/\mathbb{Q}}(\mathfrak{p})^{n-d-\frac{1}{2}}).$$
    
    Let $V(f)$ denote the set of zero points of the polynomial $f$. Since $\mathcal{N}^f(\mathfrak{p},X)=X(\mathcal{O}_K/\mathfrak{p})\cap \bigcup_{i=1}^nV(f(x_i))$, if there exists a prime ideal $\mathfrak{p}\vert\mathfrak{n}$ such that $X(\mathcal{O}_K/\mathfrak{p})\subset \mathcal{N}^f(\mathfrak{p},X)$, then there exist no $(f,\mathfrak{p})$-exunits on $X(\mathcal{O}_K/\mathfrak{p})$. By Theorem\ref{main}, we have the trivial case $\#\mathscr{E}_{f,\mathfrak{n}}(X)=0$. If $\mathcal{N}^f(\mathfrak{p},X)$ is a proper subset of $X(\mathcal{O}_K/\mathfrak{p})$, then each irreducible component of $\mathcal{N}^f(\mathfrak{p},X)$ has dimension at most $n-d-1$. On the other hand, using the generalized Bézout's Theorem in the intersection theory, we find that the number of irreducible components of $\mathcal{N}^f(\mathfrak{p},X)$ is uniformly bounded by constant depending on $d,n$ and the degree of $f$. Hence, using the Lang-Weil inequality, we obtain that $$\#\mathcal{N}^f(\mathfrak{p},X)=O(\mathrm{N}_{K/\mathbb{Q}}(\mathfrak{p})^{n-d-1}).$$
    The implied constant in the $O$-terms above depends only on $K,X$ and $f$.
    
    As a result, we have 
    \begin{align*}
        \#\mathscr{E}_{f,\mathfrak{n}}(X)&=(\mathrm{N}_{K/\mathbb{Q}}(\mathfrak{n}))^{n-d}\prod_{\mathfrak{p}\vert \mathfrak{n}}\left(\frac{\#X(\mathcal{O}_K/\mathfrak{p})-\# \mathcal{N}^f(\mathfrak{p},X)}{(\mathrm{N}_{K/\mathbb{Q}}(\mathfrak{p}))^{n-d}}\right)\\
        &=(\mathrm{N}_{K/\mathbb{Q}}(\mathfrak{n}))^{n-d}\prod_{\mathfrak{p}\vert \mathfrak{n}}\left(1+O(\mathrm{N}_{K/\mathbb{Q}}(\mathfrak{p})^{-\frac{1}{2}})\right)\\
        &=(\mathrm{N}_{K/\mathbb{Q}}(\mathfrak{n}))^{n-d}\exp\left({O\left(\sum_{\mathfrak{p}\vert\mathfrak{n}}\mathrm{N}_{K/\mathbb{Q}}(\mathfrak{p})^{-\frac{1}{2}}\right)}\right).
    \end{align*}
    We can bound the sum in the error term as follows:
    $$\sum_{\mathfrak{p}\vert\mathfrak{n}}\mathrm{N}_{K/\mathbb{Q}}(\mathfrak{p})^{-1/2}\le \omega(\mathfrak{n})=O\left(\frac{\log\mathrm{N}_{K/\mathbb{Q}}(\mathfrak{n})}{\log\log\mathrm{N}_{K/\mathbb{Q}}(\mathfrak{n})}\right),$$
    where $\omega(\mathfrak{n})$ is the number of prime ideals of $\mathcal{O}_K$ dividing $\mathfrak{n}$. 
    
    Consequently, we obtain the estimate
    $$\#\mathscr{E}_{f,\mathfrak{n}}(X)=(\mathrm{N}_{K/\mathbb{Q}}(\mathfrak{n}))^{n-d}\exp\left({O\left(\frac{\log\mathrm{N}_{K/\mathbb{Q}}(\mathfrak{n})}{\log\log\mathrm{N}_{K/\mathbb{Q}}(\mathfrak{n})}\right)}\right).$$
    This completes the proof of Theorem \ref{approximate}.
\end{proof}

Lemma \ref{LW} implies that the coefficient of the dominant error term $q^{r-\frac{1}{2}}$ vanishes whenever $\deg(X) \le 2$. This observation allows us to derive the sharper estimate presented in Theorem \ref{approximate 2}.

\begin{proof}[Proof of Theorem \ref{approximate 2}]
    We adopt the same notation and setup as in the proof of Theorem \ref{approximate}. Since $\mathrm{deg}(X) \le 2$, the coefficient $(\mathrm{deg}X-1)(\mathrm{deg}X-2)$ in Lemma \ref{LW} is zero. Consequently, the estimate for the number of rational points on $X$ improves to
    $$\#X(\mathcal{O}_K/\mathfrak{p}) = \mathrm{N}_{K/\mathbb{Q}}(\mathfrak{p})^{n-d} + O(\mathrm{N}_{K/\mathbb{Q}}(\mathfrak{p})^{n-d-1}).$$
    
    Regarding the intersection with the zero set of $f$, we retain the bound established in the previous proof
    $$\#\mathcal{N}^f(\mathfrak{p},X)=O(\mathrm{N}_{K/\mathbb{Q}}(\mathfrak{p})^{n-d-1}).$$
    
    Combining these two estimates, we have
    \begin{align*}
        \#\mathscr{E}_{f,\mathfrak{n}}(X)&=(\mathrm{N}_{K/\mathbb{Q}}(\mathfrak{n}))^{n-d}\prod_{\mathfrak{p}\vert \mathfrak{n}}\left(\frac{\#X(\mathcal{O}_K/\mathfrak{p})-\# \mathcal{N}^f(\mathfrak{p},X)}{(\mathrm{N}_{K/\mathbb{Q}}(\mathfrak{p}))^{n-d}}\right)\\
        &=(\mathrm{N}_{K/\mathbb{Q}}(\mathfrak{n}))^{n-d}\prod_{\mathfrak{p}\vert \mathfrak{n}}\left(1+O(\mathrm{N}_{K/\mathbb{Q}}(\mathfrak{p})^{-1})\right)\\
        &=(\mathrm{N}_{K/\mathbb{Q}}(\mathfrak{n}))^{n-d}\exp\left({O(\sum_{\mathfrak{p}\vert\mathfrak{n}}\mathrm{N}_{K/\mathbb{Q}}(\mathfrak{p})^{-1})}\right).
    \end{align*}
    Since $\sum_{\mathfrak{p}\vert\mathfrak{n}}\mathrm{N}_{K/\mathbb{Q}}(\mathfrak{p})^{-1}=O(\log\log(\mathrm{N}_{K/\mathbb{Q}}(\mathfrak{n})))$, we have \begin{align*}
    \#\mathscr{E}_{f,\mathfrak{n}}(X)&=\mathrm{N}_{K/\mathbb{Q}}(\mathfrak{n})^{n-d}\exp\left(O(\log\log \mathrm{N}_{K/\mathbb{Q}}(\mathfrak{n}))\right)\\
    &=\mathrm{N}_{K/\mathbb{Q}}(\mathfrak{n})^{n-d}\left(\log \mathrm{N}_{K/\mathbb{Q}}(\mathfrak{n})\right)^{O(1)}.
    \end{align*}
This completes the proof of Theorem \ref{approximate 2}.
\end{proof}
\section{Concluding Remarks}
In this paper, we have established exact formulas and asymptotic estimates for the number of $f$-exunits on smooth algebraic varieties, assuming good reduction at all relevant primes. 

A natural direction for future research is to extend these results to singular varieties or to analyze the distribution of solutions at primes of bad reduction. In such cases, the geometry of the fibers becomes significantly more complex, and the error terms in the Lang-Weil inequality may behave differently.

Furthermore, the similarity between number fields and function fields suggests that one can generalize these results to the function fields. 

Another interesting direction is to study the dynamical properties of the counting function. Harrington and Jones \cite{HJ} studied the iteration of $\phi_e(n)$ in $\mathbb{Z}_n$, and Miguel \cite{Miguel} suggested extending this to the ring of integers $\mathcal{O}_K$. However, defining a natural iteration for general $f$-exunits is complicated by the varying reduction types.

\section*{Acknowledgements}
The authors are grateful to Zhengyu Tao for helpful discussions at the early stage of this work and for his valuable suggestions on the manuscript.

\end{document}